\newcommand{\ann}{\operatorname{ann}}
\newcommand{\fn}{{\mathfrak n}}
\newcommand{\Ker}{\operatorname{Ker}}
\newcommand{\rank}{\operatorname{rank}}
\numberwithin{equation}{section}
\theoremstyle{plain}
\newtheorem{theorem}{Theorem}[section]
\newtheorem*{Theorem}{Theorem}
\newtheorem*{Main Theorem}{Main Theorem}
\newtheorem{lemma}[theorem]{Lemma}
\newtheorem{corollary}[theorem]{Corollary}
\theoremstyle{definition}
\newtheorem{construction}[theorem]{Construction}
\newtheorem{remark}[theorem]{Remark}
\newtheorem{definition}[theorem]{Definition}
  \newcounter{numlist} %
  {\end{list}}%
\theoremstyle{remark}
\newtheorem{chunk}[theorem]{}
\newenvironment{bfchunk}{\begin{chunk}\textbf}{\end{chunk}}
\numberwithin{equation}{theorem}
\numberwithin{equation}{theorem}
\newtheorem*{Claim}{Claim}
\newtheorem*{ind1}{Induction statement I}
\newtheorem*{ind2}{Induction statement II}
\newcommand\restr[2]{{
  \left.\kern-\nulldelimiterspace 
  #1 
  \vphantom{\big|} 
  \right|_{#2} 
  }}
\author[L.~M.~\c{S}ega]{Liana M.~\c{S}ega}
\address{Liana M.~\c{S}ega\\ School of Science and Engineering\\
   University of Missouri - Kansas City\\ 
   Kansas City\\ MO 64110\\ U.S.A.}
     \email{segal@umkc.edu}
     \author[D.~Sireeshan]{Deepak Sireeshan}
\address{Deepak Sireeshan\\ School of Science and Engineering\\
   University of Missouri - Kansas City\\ 
   Kansas City\\ MO 64110\\ U.S.A.}
     \email{dsbx7@mail.umkc.edu}
     \thanks{This work was supported in
part by a Simons Foundation grant (\#354594, Liana \c Sega)}
\subjclass[2010]{13D02}
\keywords{Free resolution, differential graded module, exact zero divisor}
\begin{document}
\title[]{dg module structures and minimal free resolutions\\ modulo an exact zero divisor}
\begin{abstract}Let $Q$ be a local ring with maximal ideal $\mathfrak{n}$ and  let $f,g\in \mathfrak{n}\smallsetminus\mathfrak{n}^2$ with $fg=0$. When $M$ is a finite $Q$-module with $fM=0$, we show that a minimal free  resolution of $M$ over  $Q$ has a differential graded module structure over the differential graded algebra $Q\langle y,t\mid \partial(y)=f, \partial(t)=gy\rangle$. When $(f,g)$ is a pair of exact zero divisors, we use this structure to describe a minimal free resolution of $M$ over $Q/(f)$. 
\end{abstract}

\maketitle

\section*{introduction}

Differential graded (dg) structures  provide an effective and elegant tool in commutative algebra, with particularly strong applications in the study of homological behavior under a change of ring, see Avramov \cite{avramov} for the basics of the theory. 

Let  $\varphi\colon Q\to R$ be a ring homomorphism and $M$  an  $R$-module. One would like to be able to describe a projective resolution of $M$ over $R$, given a projective resolution  $A$ of $R$ over $Q$ and a projective resolution $U$ of $M$ over $Q$.  Iyengar \cite{iyengar} provides such a construction when the resolutions $A$ and $U$ have appropriate differential graded (dg) structures and discusses the existence of such structures.  When $Q$ is local (meaning also commutative noetherian)  with maximal ideal $\fn$ and $\varphi$ is surjective,  we are particularly interested in minimal free resolutions. In this case, if $A$ is a resolution of $R$ over $Q$ admitting a dg algebra structure and $U$ is a minimal free resolution of $M$ over $Q$ admitting a semi-free dg $A$-module structure, it is known that $U\otimes_A R$ is a minimal free resolution of $M$ over $R$, see ~\cref{l:standard}.  

When $f\in \fn\smallsetminus \fn^2$ and $R=Q/(f)$, a construction of Shamash can be used to build a semi-free dg module structure of $U$ over the Koszul complex on $f$, see \cite[Proposition 2.2.2]{avramov}. Consequently, this leads  to a description of a minimal free resolution of $M$ over $R$ when $f$ is, in addition, a non-zero divisor, cf.~\cite[Theorem 2.2.3]{avramov}.  In this paper, we extend the Shamash construction when  $f$ is a zero divisor. 

\begin{Theorem}
Let $(Q,\fn)$ be a local ring  and let $f,g\in \fn\smallsetminus \fn^2$ such that $fg=0$.  Set $R=Q/(f)$ and $A=Q\langle y,t\mid \partial(y)=f, \partial(t)=gy\rangle$. If $M$ is a finitely generated  $R$-module and $U$ is a minimal free resolution of $M$ over $Q$, then:
\begin{enumerate}[\quad\rm(a)]
\item  $U$ has a structure of semi-free dg module over $A$. 

\item Furthermore, if $\ann(f)=(g)$ and $\ann(g)=(f)$, then $U\otimes_AR$ is a minimal free resolution of $M$ over $R$.
\end{enumerate}
\end{Theorem}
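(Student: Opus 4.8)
The plan is to construct the dg algebra $A$ explicitly as a free $Q$-module, then build the semi-free dg $A$-module structure on $U$ by an inductive ``lifting'' procedure, and finally exploit minimality to identify $U\otimes_A R$.

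\smallskip

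\textbf{Setting up $A$.} First I would describe $A = Q\langle y,t\mid \partial(y)=f,\ \partial(t)=gy\rangle$ concretely. The variable $y$ has homological degree $1$ (an exterior variable, so $y^2=0$), while $t$ has homological degree $2$ (a divided-power/polynomial variable). Thus as a graded $Q$-module, $A$ has basis $\{t^{(i)},\, yt^{(i)}\}_{i\ge 0}$ in degrees $2i$ and $2i+1$ respectively, and the differential is determined by $\partial(y)=f$, $\partial(t^{(i)})=gy\,t^{(i-1)}$, $\partial(yt^{(i)})=f t^{(i)} - gy\cdot yt^{(i-1)} = f t^{(i)}$ (the last term vanishes since $y^2=0$), extended by the Leibniz rule. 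I would verify $\partial^2=0$ (this uses $fg=0$) and, crucially, that $A$ is \emph{acyclic in positive degrees} when $\Ann(f)=(g)$ and $\Ann(g)=(f)$: a cycle in degree $2i$ is $a\,t^{(i)}+(\text{lower})$ with $ag\in (g)\cap\ldots$, and tracking the conditions shows $\HH_+(A)=0$, i.e.\ $A$ is a $Q$-free resolution of $R=Q/(f)$. Part~(b) will then follow from part~(a) together with \cref{l:standard}, provided the dg $A$-module structure on $U$ is \emph{minimal} (differentials with entries in $\fn$), which holds because $U$ is a minimal $Q$-resolution and $f,g\in\fn$.

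\smallskip

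\textbf{Building the semi-free dg $A$-module structure on $U$ (part (a)).} This is the technical heart. A semi-free dg $A$-module structure on $U$ amounts to giving, compatibly with $\partial^U$ and the $Q$-linear differential, two chain maps: multiplication by $y$, say $\sigma\col U\to U$ of degree $+1$, and multiplication by $t$, say $\tau\col U\to U$ of degree $+2$, satisfying the relations forced by $A$ — namely $\sigma^2=0$, $\partial\sigma+\sigma\partial = f\cdot\mathrm{id}$, $\partial\tau - \tau\partial = \sigma\circ(\text{mult by }g)$ in the appropriate signed form, i.e.\ $\partial\tau+\tau\partial = g\sigma$ with the convention $\partial(ty)=\ldots$, and the compatibility $\tau\sigma=\sigma\tau$ plus $\sigma$ being a homotopy. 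The map $\sigma$ exists because $f M=0$: the identity map $f\cdot\mathrm{id}_U$ is null-homotopic (it is zero on $\HH_0(U)=M$ and $U$ is a $Q$-free resolution), so choose $\sigma$ with $\partial\sigma+\sigma\partial=f$; then $\sigma^2$ is a chain map of degree $2$ that is a cycle, hence null-homotopic, and one adjusts to arrange $\sigma^2=0$ — this is exactly the classical Shamash/Eisenbud step. The new ingredient is $\tau$: I need $\partial\tau+\tau\partial = g\sigma$ (schematically), and this is solvable iff $g\sigma$ is null-homotopic as a chain map $U\to U$ of degree $1$; since $\sigma$ is a homotopy for $f\cdot\mathrm{id}$ and $fg=0$, the map $g\sigma$ satisfies $\partial(g\sigma)+(g\sigma)\partial = g(\partial\sigma+\sigma\partial)=gf\cdot\mathrm{id}=0$, so $g\sigma$ is a chain map; to see it is null-homotopic, note it induces zero on homology (it factors through multiplication by $g$ on $M$, and $gM\subseteq$... — more carefully, it is degree $1$ hence automatically zero on $\HH_0$, and $U$ exact in positive degrees forces it null-homotopic). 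Then set $\tau$ to be the chosen homotopy, and inductively correct $\tau$ (degree by degree, using acyclicity of $U$) so that the remaining divided-power relations hold, defining also multiplication by $t^{(i)}$. I would phrase this as: build a map of dg $Q$-algebras $A \to \operatorname{Hom}_Q(U,U)$ lifting the $A$-action on $M$, via the standard obstruction-theoretic lifting along the surjective quasi-isomorphism... no — more directly, build the action by induction on homological degree, at each stage solving a lifting problem whose obstruction lies in a $\HH$ group that vanishes because $U$ is acyclic.

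\smallskip

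\textbf{The main obstacle.} The genuinely delicate point is handling the \emph{divided-power} structure on $t$: one must define not just multiplication by $t$ but compatible multiplications by all $t^{(i)}$, satisfying $t^{(i)}t^{(j)}=\binom{i+j}{i}t^{(i+j)}$ and $\partial t^{(i)}=gy\,t^{(i-1)}$, and ensure these assemble into an honest semi-free — in particular \emph{associative} — dg $A$-module action, not merely a collection of homotopies. In the non-zero-divisor Shamash construction the analogous step is automatic because the relevant complex is a polynomial (Koszul-type) resolution with no obstructions; here the presence of $g$ and the exterior variable $y$ sitting ``between'' the divided powers means the associativity and $\partial$-compatibility relations are coupled, and verifying that the inductive choices can be made coherently — i.e.\ that no obstruction accumulates — will require a careful bookkeeping argument, likely organized as a single induction producing the $Q$-linear maps $\{$mult by $t^{(i)}\}$ and $\{$mult by $yt^{(i)}\}$ simultaneously with all compatibilities. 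I expect the acyclicity of $U$ in positive degrees (as a $Q$-free resolution) to be exactly what kills each successive obstruction, and minimality of $U$ to be what makes the construction simultaneously \emph{minimal}, which is what feeds into \cref{l:standard} to conclude (b).
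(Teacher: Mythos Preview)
Your outline captures the right intuition—build the $A$-action on $U$ by an obstruction-theoretic lifting, with obstructions killed by acyclicity of $U$—and this is indeed how the paper proceeds. But there is a genuine gap: you never explain how \emph{semi-freeness} is obtained. You write as if constructing a dg $A$-module structure on $U$ (a map of dg algebras $A\to\Hom_Q(U,U)$) is the whole task, and then invoke \cref{l:standard}. But \cref{l:standard} requires $U^\#$ to be \emph{free} over $A^\#$, and nothing in your lifting procedure guarantees this. Acyclicity of $U$ lets you solve each lifting problem, but it does not force the resulting multiplication-by-$y_i$ maps to be split injections; a priori they could have kernels or fail to give a direct-sum decomposition $U_n=\bigoplus_k y_kV_{n-k}$. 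Your remark that ``minimality of $U$ makes the construction simultaneously minimal'' is beside the point: minimality of $U$ as a $Q$-complex is automatic, but freeness of $U^\#$ over $A^\#$ is a separate and nontrivial condition. This is exactly where the hypothesis $f,g\notin\fn^2$ enters, and it does not appear anywhere in your argument.

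The paper resolves this by organizing the induction differently from what you sketch. Rather than building $\sigma$ (multiplication by $y$) globally, then $\tau$, then ``correcting'' for divided powers, the paper runs a double induction: outer on the source degree $n$, inner on the index $i$ of the basis element $y_i\in A$. At each stage it simultaneously constructs the multiplication maps $\sigma_{i,n}\colon U_n\to U_{i+n}$ \emph{and} a free $Q$-summand $V_n\subseteq U_n$ complementary to $\sum_{k\ge 1}\sigma_{k,n-k}(V_{n-k})$. The key device is that $\sigma_{i,n}$ is \emph{defined} on the ``old'' summands $\sigma_{k,n-k}(V_{n-k})$ (for $k>0$) by the associativity relation $\sigma_{i,n}\sigma_{k,n-k}=\alpha_{i,k}\sigma_{i+k,n-k}$, so associativity and the divided-power compatibilities hold there by fiat; only on the ``new'' summand $V_n$ is $\sigma_{i,n}$ obtained by lifting through $\partial$. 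This is what makes your ``correct $\tau$ inductively so the divided-power relations hold'' precise—and it is not a post-hoc adjustment but is baked into the definition. Finally, the paper proves a split-injection condition (their $\mathcal C(n)$): if $\sum_i\sigma_{i,n+1-i}(x_{n+1-i})\in\fn U_{n+1}$ then each $x_j\in\fn V_j$. The proof of this step applies $\partial$, uses the Leibniz relations already established, and then uses $f,g\notin\fn^2$ to peel off one coordinate at a time. This is what certifies semi-freeness, and it is the step your proposal is missing entirely.
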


The dg algebra  $A$  in the theorem is obtained from $Q$ by adjoining an exterior algebra variable $y$ in degree $1$  and a divided powers variable $t$ in degree $2$, see \cref{tate} for details on the notation, which is based on a construction usually referred to as a {\it Tate construction}. When the hypothesis of part (b) holds, we say that $(f,g)$ is a {\it pair of exact zero divisors} and that $f$, respectively $g$, is an {\it exact zero divisor}.  In this case, $A$ is a minimal free resolution of $R$ over $Q$. Although not all rings admit exact zero divisors, there exist significant classes of rings that do. See, for example,  Henriques and \c{S}ega \cite{zerodiv} for a treatment of a class of rings that are known to admit exact zero divisors, due to a result of Conca, Rossi, and Valla. Such elements can play an important role in understanding the structure and homological properties of artinian algebras. 

Let  $Q\to R$ be a surjective homomorphism of local rings such that a minimal free resolution $A$ of $R$ over $Q$ admits a dg algebra structure and $M$ is a finite $R$-module. The question of whether a minimal free resolution $U$ of $M$ over $Q$ admits a dg $A$-module structure was originally asked by Buchsbaum and Eisenbud \cite{be}, in the case when $A$ is a Koszul complex. Subsequently, this question was studied in the work of several authors, including Avramov,  Kustin, Iyengar, Miller, and Srinivasan, see \cite{obstructions}, \cite{iyengar}, \cite{kustin}, \cite{kustin-miller}, \cite{srinivasan1}, \cite{srinivasan2}, with both positive and negative answers. In all previously known results in which existence is established, either $A$ is a Koszul complex, or the resolution $U$ has a finite length.  Our result provides a first positive answer  (to our knowledge) in a case when both $A$ and $U$ are infinite. 

The minimal free resolution of $M$ over $R$  constructed in the theorem can be used to recover a known relationship \cite[Theorem 1.7]{zerodiv} between the betti numbers of $M$ over $Q$ and the betti numbers of $M$ over $R$ when $(f,g)$ is a pair of exact zero divisor, see \cref{r:betti}.  We note, however, that part (b) of the theorem, together with the construction of the dg module structure in the proof of (a) and the more detailed description of the complex $U\otimes_AR$ in Construction \ref{constr},  give a full description of the differential, and not just the betti numbers.

\section{The Tate construction}
In this section, we describe in detail the construction of the dg algebra $A$ in  the main theorem, and we set up notation that will be  used in the proof of the theorem. 

\begin{bfchunk}{Dg algebras and dg modules.}
Let $Q$ denote a commutative ring. If $W$ is a complex of $Q$-modules, we denote by $W^\#$ the underlying graded $Q$-module and we use $|w|$ for the degree of a homogeneous element $w$ of $W$. 

A {\it dg algebra} over $Q$ is a complex $A=(A, \partial)$, concentrated in non-negative homological degrees, where $A^\#$ is a $Q$-algebra that is (graded) commutative, meaning that 
$$
ab=(-1)^{|a||b|}ba\quad\text{for $a,b\in A$}\quad\text{and}\quad \text{$a^2=0$ when $|a|$ is odd}\,,
$$
and such that the Leibniz formula holds:  
$$
\partial(ab)=(\partial a)b+(-1)^{|a|}a(\partial b)\quad\text{for $a, b\in A$}\,.
$$
If $A$ and $B$ are dg $Q$-algebras, then $A\otimes_QB$ is also a dg-algebra, with multiplication given by 
\begin{equation}
\label{e:product}
(a\otimes b)(a'\otimes b')=(-1)^{|a'||b|}aa'\otimes bb'
\end{equation}
and the usual differential defined on a tensor product of complexes. 
We usually identify $A$ and $B$ with their images in $A\otimes_QB$ and we write $ab$ instead of $a\otimes b$. 

A {\it dg module} $U$ over $A$ is a complex $(U, \partial)$ with $U^\#$ an $A^\#$-module such that 
$$
\partial(au)=(\partial a)u+(-1)^{|a|}a(\partial u)\quad\text{for $a\in A$ and $u\in U$.}
$$
A bounded below dg $A$-module $U$ is said to be {\it semi-free} if $U^\#$ is free over $A^\#$. 
\end{bfchunk}

Existence of semi-free dg module structures is particularly useful, as they give an easy construction of free resolutions over $R$ from those over $Q$. For a more general construction, that does not require the semi-free assumption, see \cite{iyengar}. The benefit of the setting  in Lemma \ref{l:standard} below is that, in the local case, one obtains a minimal resolution when starting with a minimal one.

\begin{construction}\label{constr}
Let $Q\to R$ be a surjective homomorphism of commutative rings. Let $A$ be a dg $Q$-algebra algebra with $H_0(A)=R$ and let $U=(U, \partial)$ be a dg semi-free $A$-module.  Let  $\{e_{\lambda}\}_{\lambda\in \Lambda}$ denote a basis of  $U^\#$ over $A^\#$. For each integer $n$,  let $V_n$ denote  the free $Q$-summand of $U_n$ with basis $\{e_{\lambda}\mid \lambda\in \Lambda, |e_\lambda|=n\}$ and let $\pi_{n-1}\colon U_{n-1}\to V_{n-1}$ denote  the projection map.  Then $U\otimes_AR$ can be identified with the complex $U'=(U', \partial')$, with $$U'_n=V_n\otimes_QR\quad\text{and}\quad \partial'_n=\pi_{n-1}\restr{\partial_n}{V_n}\otimes_Q1_R\,.$$ 

Indeed, assume $R=Q/I$ for an ideal $I$, and $Q\to R$ is the natural projection. Let $\varepsilon\colon  A\to R$ denote the augmentation map and set  $J=\text{Ker}(\varepsilon)$. We have then $R=A/J$, and thus $U\otimes_AR\cong U/JU$. Let $n\ge 0$. The definition of $V_n$ gives $U_n=V_n\oplus W_n$, where $W_n=(A_{\ge 1}U)_n$. We have thus: 
$$
(U\otimes_AR)_n\cong \left(\frac{U}{JU}\right)_n=\frac{U_n}{IU_n+W_n}\cong \frac{U_n}{W_n}\otimes_QR\cong V_n\otimes_QR\,.
$$
The differential of $U'$ is induced by the one on $U\otimes_AR$, which is in turn induced by the differential of  $U$, and this yields directly the claimed expression for $\partial'$. 
\end{construction}

 The following is a standard argument that appears, for instance,  in the proof of \cite[Theorem 2.2.3]{avramov}. 

\begin{lemma}\label{l:standard}
Let $Q\to R$ be a surjective homomorphism of commutative rings. Let $A$ be a free  resolution of $R$ over $Q$ that has a structure of dg algebra, and let $U=(U, \partial)$ be a free resolution of $M$ over $Q$, that has a structure of semi-free dg-module over $A$. Then $U\otimes_AR$ is a free resolution of $M$ over $R$. 
In particular, if $Q$ is local and $U$ is minimal, then $U\otimes_AR$ is a minimal free resolution of $M$ over $R$.  
\end{lemma}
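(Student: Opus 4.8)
The plan is to prove that $U \otimes_A R$ is a free resolution of $M$ over $R$ by showing it has homology concentrated in degree $0$, where it equals $M$. First I would use a standard change-of-rings spectral sequence or, more elementarily, a filtration argument: since $A$ is a dg algebra resolving $R$ over $Q$ and $U$ is a semi-free dg $A$-module resolving $M$ over $Q$, the complex $U \otimes_A R$ computes $\Tor^A(U, R)$ in an appropriate derived sense. The key point is that $U \to M$ is a quasi-isomorphism of dg $A$-modules (viewing $M$ as a dg $A$-module via the augmentation $A \to R \to$ acting on $M$), and semi-freeness of $U$ means that $- \otimes_A R$ preserves this quasi-isomorphism. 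Hence $U \otimes_A R \simeq M \otimes_A R$. Then, because $A \to R$ is a quasi-isomorphism and $M$ is an $R$-module, $M \otimes_A R \cong M$ (concentrated in degree $0$), using that $A$ is semi-free over itself and $H_0(A) = R$ acts on $M$ compatibly; more carefully, $M \otimes_A R = M \otimes_A (A \otimes_A R)$ and one checks $A \otimes_A R = R$.

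To make the quasi-isomorphism step precise without invoking heavy machinery, I would instead argue directly with a filtration of $U^\#$ by free $A^\#$-submodules coming from a semi-basis, reducing to the case $U = A\langle e\rangle$ a shifted copy of $A$, where $U \otimes_A R = R$ shifted, which is exact in positive degrees relative to the analogous statement for $A$ — but since $U$ is infinitely generated this requires the semi-free filtration to be exhausting and the homology to commute with the colimit, which holds because homology commutes with filtered colimits. Alternatively, and most cleanly for the write-up, I would cite that for a semi-free dg $A$-module $U$, the functor $U \otimes_A -$ sends quasi-isomorphisms to quasi-isomorphisms (this is the defining good property of semi-free modules); apply it to the quasi-isomorphism $A \xrightarrow{\simeq} R$ to get $U = U \otimes_A A \xrightarrow{\simeq} U \otimes_A R$. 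Wait — that gives $U \otimes_A R \simeq U$, which is wrong. The correct input is the quasi-isomorphism $U \xrightarrow{\simeq} M$ of dg $A$-modules; since $U$ and (trivially) a semi-free replacement are semi-free, $- \otimes_A R$ applied to $U \xrightarrow{\simeq} M$ gives $U \otimes_A R \simeq M \otimes_A R = M$.

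So the cleanest route: (1) $U$, being a free resolution of $M$ over $Q$ with compatible $A$-action and $A$ resolving $R$, is a semi-free resolution of $M$ over $A$; here $M$ is regarded as a dg $A$-module concentrated in degree $0$ via $A \twoheadrightarrow H_0(A) = R$. (2) Hence $H(U \otimes_A R) = \Tor^A_\bullet(M, R)$. (3) Compute this Tor: using that $R$ is resolved over $A$ by $A$ itself (as $A \to R$ is a quasi-isomorphism and $A$ is semi-free over $A$), we get $\Tor^A_\bullet(M, R) = H_\bullet(M \otimes_A A) = H_\bullet(M) = M$ in degree $0$ and $0$ elsewhere. Therefore $U \otimes_A R$ is a resolution of $M$ over $R$. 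The main obstacle is step (2)/(3) — making rigorous that $U \otimes_A R$ computes the derived tensor product despite $U$ being unbounded and infinitely generated over $A^\#$; this is handled by the fact that semi-free modules are K-projective (acyclic-cofibrant), so $U \otimes_A -$ is exact on quasi-isomorphisms, a fact that holds in full generality for semi-free dg modules via the filtration by free submodules and a standard diagram chase / five-lemma induction combined with commutation of homology with colimits.

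For the final ("In particular") assertion, suppose $Q$ is local and $U$ is a minimal free resolution of $M$ over $Q$. By Construction~\ref{constr}, $U \otimes_A R \cong U' = (U', \partial')$ with $U'_n = V_n \otimes_Q R$ and $\partial'_n = \pi_{n-1}(\partial_n|_{V_n}) \otimes_Q 1_R$. Minimality of $U$ over $Q$ means $\partial(U) \subseteq \fn U$; restricting and projecting, $\partial'_n$ has entries in the image of $\fn$ in $R$, which is the maximal ideal of $R$. Hence $\partial' \otimes_R (R/\fn R) = 0$, so $U'$ is a minimal complex of free $R$-modules resolving $M$, i.e.\ a minimal free resolution of $M$ over $R$. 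I would spell out only that $\partial_n(V_n) \subseteq \fn U_{n-1}$ and $\pi_{n-1}(\fn U_{n-1}) \subseteq \fn V_{n-1}$, the rest being the definition of minimality.
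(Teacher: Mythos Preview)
Your final argument is correct, and your treatment of minimality via Construction~\ref{constr} matches the paper's. However, you actually wrote down the paper's own proof midway and then dismissed it by mistake. The paper applies $U\otimes_A(-)$ to the quasi-isomorphism $\varepsilon\colon A\to R$ (citing \cite[Proposition~1.3.2]{avramov} for the fact that semi-free dg modules preserve quasi-isomorphisms under tensoring), obtaining a quasi-isomorphism $U=U\otimes_A A\to U\otimes_A R$. That is precisely the step you labeled ``wrong'': a quasi-isomorphism $U\xrightarrow{\simeq} U\otimes_A R$ is exactly what is needed, since $U$ already has homology $M$ concentrated in degree~$0$, and hence so does $U\otimes_A R$. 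There is nothing problematic about a complex of $R$-modules being quasi-isomorphic to a complex of $Q$-modules; it just says they share the same homology.

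Your alternative route---viewing $U$ as a semi-free dg $A$-module resolution of $M$, identifying $\HH(U\otimes_A R)$ with $\Tor^A_\bullet(M,R)$, and then balancing by resolving $R$ via $A$ itself---is valid but longer. What it buys is a conceptual reminder that both variables can be resolved; what the paper's one-line argument buys is that you never leave the complex $U$, so there is no need to invoke $\Tor$ or balance resolutions. In short: your eventual proof is fine, but the sentence you crossed out is the cleanest proof and is the one the paper gives.
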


\begin{proof}
Since $A$ is a free resolution of $R$ over $Q$, the augmentation map $\varepsilon: A \to R$ is a quasi-isomorphism (that is, it induces an isomorphism in homology).  By \cite[Proposition 1.3.2]{avramov}, $\varepsilon\colon  A\to R$ induces a quasi-isomorphism $U\to U\otimes _AR$. This shows  that $U\otimes_AR$ is a free resolution of $M$ over $R$.  The last statement, regarding minimality, follows directly from Construction  \ref{constr}. 
\end{proof}

\begin{bfchunk}{The Tate construction.}\label{tate}
We describe below a basic procedure of adjoining variables to create new dg algebras, originally used by Cartan and further adapted by Tate. We refer to \cite{avramov} for the general construction, and we adapt it here to the case of interest. 

We denote by $Q\langle y\rangle$ the graded free $Q$-module with basis $1$, $y$, where $|y|=1$, and define a $Q$-linear multiplication by setting 
$$y^2=0\,.$$
Let $f\in Q$. We write $K=Q\langle y\mid \partial(y)=f\rangle$ for the dg $Q$-algebra  with underlying graded algebra $Q\langle y\rangle$ and differential satisfying $$\partial(y)=f\,.$$
 This is precisely a Koszul complex on $f$.

We denote by $Q\langle t\rangle$ the graded free module with basis $\{t^{(i)}\}$, where $|t^{(i)}|=2i$, and define a $Q$-linear multiplication by 
$$
t^{(0)}=1\quad \text{and}\quad  t^{(i)}t^{(j)}=\frac{(i+j)!}{i!j!}t^{(i+j)}\,.
$$ 
We set $t=t^{(1)}$; this element is called a {\it divided powers variable} and the basis elements $\{t^{(i)}\}$ are  called a {\it system of divided powers} of $t$. 

Let $g\in Q$ with $fg=0$, so that $gy$ is a cycle in $K$. 
We write $$A=Q\langle y,t\mid \partial(y)=f, \partial(t)=gy\rangle$$ for the dg $Q$-algebra with underlying graded algebra $K\otimes_QQ\langle t\rangle$, with differential extending the one on $K$ and satisfying 
$$
\partial(t^{(i)})=gyt^{(i-1)}\,.
$$
Note that we are using product notation for tensor products of elements, e.g. we are writing  $yt^{(n)}$ instead of $y\otimes t^{(n)}$. 
The multiplication on $A$ comes from the multiplication on $K$ and that on $Q\langle t\rangle$ using formula \eqref{e:product}. In particular, we have 
$$
yt^{(n)}=t^{(n)}y \quad\text{for all $n\ge 0$}\,.
$$
It is known and easy to check that, with these definitions, $A$ is indeed a dg algebra, that is referred to as the {\it Tate construction} on $f,g$. 
\end{bfchunk}

\begin{bfchunk}{Notation.}
We set  $$f_i=\begin{cases}
f &\text{if $i$ is odd}\\
g&\text{if $i$ is even.}\end{cases}$$
For all $i\ge 0$ we set
$$
y_{2i}=t^{(i)}\quad \text{and}\quad  y_{2i+1}=t^{(i)}y\,.
$$
We set $y_i=0$ if $i<0$. 
With this notation, note that $A$ is a free $Q$-module with basis $\{y_n\}_{n\ge 0}$ such that $|y_n|=n$ for all $n$, 
 and 
$$\partial_i(y_i)=f_iy_{i-1} \quad \text{for all $i$.}$$

The multiplicative structure of $A$ is given by the unit $y_0$ and the relations
\begin{align}
\label{y}
\begin{split}
y_{2i+1}y_{2j+1}&=0\\
y_{2i}y_{2j}&=\binom{i+j}{i}y_{2(i+j)}\\
y_{2i}y_{2j+1}&=\binom{i+j}{i}y_{2(i+j)+1} = y_{2j+1}y_{2i} 
\end{split}
\end{align}
for all $i,j\ge 0$. Using these relations, define $\alpha_{i,j}\in Q$ such that 
$$y_iy_j=\alpha_{i,j}y_{i+j}\quad\text{for all $i\ge 0$, $j\ge 0$}\,.$$

More precisely, we have $\alpha_{i,j}=0$ when both $i$ and $j$ are odd, $\alpha_{i,j}=\binom{\frac{1}{2}(i+j)}{\frac{i}{2}}$ if both $i,j$ are even, $\alpha_{i,j}=\binom{\frac{1}{2}(i+j-1)}{\frac{i}{2}}$ if $i$ is even and $j$ is odd, and $\alpha_{i,j}=\binom{\frac{1}{2}(i+j-1)}{\frac{i-1}{2}}$ if $i$ is odd and $j$ is even. 

For all $i,j\ge 0$, note that 
$$\alpha_{i,j}=\alpha_{j,i}\,.$$
Further, one can easily verify that 
\begin{align}
\label{2}
\alpha_{i,j}\alpha_{i+j,k}&=\alpha_{j,k}\alpha_{i,j+k}\qquad \text{for all $i,j,k\ge 0$}\\
\label{1}
\alpha_{i-1,j}f_i+(-1)^{i}\alpha_{i,j-1}f_j&=\alpha_{i,j}f_{i+j}\qquad\quad \text{for all $i,j\ge 1$\,.}
\end{align}
In fact,  \eqref{2} is equivalent to the associativity of $A$, namely  $(y_iy_j)y_k = y_i(y_jy_k)$, and \eqref{1} is equivalent to the Leibniz rule for $A$, namely: 
$$\partial(y_iy_j)=\partial(y_i)y_j+(-1)^iy_i\partial(y_j)\,.
$$
Finally, observe that the Tate construction $A$ is a periodic complex of rank $1$ free $Q$-modules: 
$$
\dots\to Qy_{2n+1}\xrightarrow{f}Qy_{2n}\xrightarrow{g}Qy_{2n-1}\xrightarrow{f}Qy_{2n-2}\to\dots\to Qy_2\xrightarrow{g}  Qy_1\xrightarrow{f}Qy_0\to 0\,,
$$ 
\end{bfchunk}

\begin{remark}
Let $A$ be as in \ref{tate}, and  set $R=Q/(f)$. Then, with the assumptions and notation introduced in Construction \ref{constr}, we have  an isomorphism of free $Q$-modules
\begin{equation}\label{sum}
U_n=V_n\oplus y_1V_{n-1}\oplus y_2V_{n-2}\oplus \cdots \oplus y_nV_0\cong \bigoplus_{i=0}^n V_n\,.
\end{equation}
\end{remark}

\begin{definition}[{\bf Exact zero divisors} ]
\label{ezd}
Let $f,g\in Q$. We say that $(f,g)$ is a {\it pair of exact zero divisors} if $\ann(f)=(g)$ and $\ann(g)=(f)$; we also refer to $f$ or $g$ as an {\it exact zero divisor}.  In this case, the Tate construction on $f,g$ is a minimal free resolution of $Q/(f)$ over $Q$. 
\end{definition}
\section{Proof of the main theorem}
Our main theorem is an extension of a construction of  Shamash \cite{Shamash}, along the lines of the proof in  \cite[Proposition 2.2.2, Theorem 2.2.3]{avramov}. 
We recall below the statement of the main theorem from the introduction and then proceed to prove it. A corollary  about betti numbers is given at the end of the section. 

\begin{theorem}\label{t:main}
Let $(Q,\fn)$ be a local ring  and let $f,g\in \fn\smallsetminus \fn^2$ such that $fg=0$.  Set $R=Q/(f)$ and $A=Q\langle y,t\mid \partial(y)=f, \partial(t)=gy\rangle$. If $M$ is a finitely generated  $R$-module and $U$ is a minimal free resolution of $M$ over $Q$, then:
\begin{enumerate}[\quad\rm(a)]
\item  $U$ has a structure of semi-free dg module over $A$. 

\item Furthermore, if $\ann(f)=(g)$ and $\ann(g)=(f)$, then $U\otimes_AR$ is a minimal free resolution of $M$ over $R$. 
\end{enumerate}
\end{theorem}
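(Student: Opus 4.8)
The plan is to establish part (a) by an explicit inductive construction extending that of Shamash, following the pattern of the proof of \cite[Proposition~2.2.2]{avramov}, and then to deduce part (b) formally. Indeed, (b) follows at once: under the hypothesis $\ann(f)=(g)$, $\ann(g)=(f)$ the dg algebra $A$ is a minimal free resolution of $R$ over $Q$ by \cref{ezd}, and $U$ is by hypothesis a minimal free resolution of $M$ over $Q$ which, by (a), carries a semi-free dg $A$-module structure; so \cref{l:standard} applies and shows that $U\otimes_A R$ is a minimal free resolution of $M$ over $R$. The substance of the theorem is therefore entirely in (a).

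For (a), recall from \cref{tate} that $A^{\#}$ is $Q$-free on $\{y_n\}_{n\ge 0}$ with $|y_n|=n$, $\partial(y_n)=f_ny_{n-1}$, and $y_iy_j=\alpha_{i,j}y_{i+j}$. Endowing $U$ with a semi-free dg $A$-module structure thus amounts to exhibiting a graded free $Q$-submodule $V=\bigoplus_n V_n$ of $U$ for which $U_n=\bigoplus_{i=0}^n y_iV_{n-i}$ under the $A^{\#}$-action making $V$ a basis, together with a differential obeying the Leibniz rule. As $U^{\#}$ is then $A^{\#}$-free on $V$, the differential is determined by its restriction to $V$, that is, by $Q$-linear maps $\sigma_j\colon V\to V$ of degree $-j-1$ with $\partial(v)=\sum_{j\ge 0}y_j\sigma_j(v)$ for $v\in V$. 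Since $\partial^2(au)=a\,\partial^2(u)$ in a dg $A$-module, the condition $\partial^2=0$ need only be tested on $V$, and, using $\partial(y_j)=f_jy_{j-1}$ and $y_jy_k=\alpha_{j,k}y_{j+k}$, it becomes the system
\begin{equation*}
f_{m+1}\sigma_{m+1}+\sum_{j=0}^{m}(-1)^{j}\alpha_{j,m-j}\,\sigma_{m-j}\sigma_j=0\quad\text{on }V,\qquad\text{for every }m\ge 0 .
\end{equation*}
The instance $m=0$ reads $\sigma_0^2=-f\sigma_1$, so $\sigma_0$ induces a differential on $V\otimes_Q R$; the resulting complex is $U\otimes_A R$ in the sense of \cref{constr}.

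I would build the structure by induction on homological degree $n$. At the start of stage $n$ one has, in degrees below $n$: free summands $V_k\subseteq U_k$, the operators $\mu_i\colon U\to U$ of degree $i$ (``multiplication by $y_i$'', with $\mu_0=\operatorname{id}$) satisfying $[\partial,\mu_i]=f_i\mu_{i-1}$ and $\mu_i\mu_j=\alpha_{i,j}\mu_{i+j}$, and the identifications $U_k=\bigoplus_{i=0}^k\mu_i(V_{k-i})$. Stage $n$ has three parts. (i) Show that $W_n:=\sum_{i\ge 1}\mu_i(V_{n-i})$ is a free direct summand of $U_n$, of rank $\sum_{i\ge 1}\rank V_{n-i}$, and pick a free complement $V_n$, so $U_n=V_n\oplus W_n$. (ii) Extend each $\mu_i$ to $V_n$ (in order of increasing $i$): the relation $[\partial,\mu_i]=f_i\mu_{i-1}$ forces $\partial\circ(\mu_i|_{V_n})=\bigl(f_i\mu_{i-1}+(-1)^{i}\mu_i\partial\bigr)|_{V_n}$, and the right-hand side is already defined; using $[\partial,\mu_{i-1}]=f_{i-1}\mu_{i-2}$, $\partial^2=0$, and the crucial identity $f_if_{i-1}=fg=0$, one checks that it carries $V_n$ into $\ker\partial=\operatorname{im}\partial$, hence (as $V_n$ is free) lifts through $\partial$ to define $\mu_i|_{V_n}$. (iii) Verify that, with these extensions, the relations $\mu_i\mu_j=\alpha_{i,j}\mu_{i+j}$ and $[\partial,\mu_i]=f_i\mu_{i-1}$ still hold in degree $n$; this is bookkeeping controlled by the associativity identity \eqref{2} and the Leibniz identity \eqref{1} for the constants $\alpha_{i,j}$.

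I expect part (i) to be the main obstacle: showing that the partial data assemble into a free direct-summand decomposition $U_n=V_n\oplus\bigoplus_{i\ge 1}\mu_i(V_{n-i})$ is precisely where the hypotheses $fg=0$, $f,g\notin\fn^2$, and the minimality of $U$ must all be brought to bear. (Some such input is unavoidable: without a zero-divisor partner $g$ the Betti numbers of $M$ over $Q$ need not be non-decreasing, whereas freeness of $U^{\#}$ over $A^{\#}$ forces $\rank V_n=\beta_n^Q(M)-\beta_{n-1}^Q(M)\ge 0$.) This is the real extension of Shamash's construction past the non-zero-divisor case; granting it, parts (ii)--(iii) are formal, and the description of $U\otimes_A R$ afforded by \cref{constr} recovers in particular the Betti-number relation \cite[Theorem~1.7]{zerodiv}.
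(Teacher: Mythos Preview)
Your outline is correct and is essentially the paper's proof: the paper builds maps $\sigma_{i,n}\colon U_n\to U_{i+n}$ (your $\mu_i$) and free submodules $V_n\subseteq U_n$ by a double induction, enforcing precisely your relations $[\partial,\mu_i]=f_i\mu_{i-1}$ and $\mu_i\mu_j=\alpha_{i,j}\mu_{i+j}$ together with the split-summand property (your step~(i)), defines $\mu_i$ on $V_n$ by the lifting argument you describe and on the complement $W_n$ by forcing associativity, and then obtains (b) from \cref{l:standard} exactly as you do. For the step you flag and defer, the paper shows that $\Phi_{n}(x)\in\fn U_{n+1}$ forces each $x_k\in\fn V_k$ by applying $\partial_{n+1}$, expanding via the Leibniz relation $\mathcal A$, and peeling off the summands one at a time using $f_{k+1}\notin\fn^2$ together with minimality of $U$.
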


\begin{proof}
(a) For all integers $i\ge 0$ and $n$ we will construct homomorphisms 
$$\sigma_{i,n}\colon U_n\to U_{i+n}$$
and free $Q$-modules  $V_n$ with $V_{n}\subseteq U_{n}$ such that: 
\begin{enumerate}[(1)]
\item $\sigma_{0,n}=\text{id}_{U_n}$ for all $n$. 
\item For all $n$ and $i\ge 1$ the following condition holds:
\begin{equation}
\label{A}
\mathcal A(i,n): \qquad \partial_{i+n}\sigma_{i,n}+(-1)^{i+1}\sigma_{i,n-1}\partial_{n}=f_i\sigma_{i-1,n}\,.
\end{equation}
\item For all $n$ and all $i,j$ with $i\ge 0$ and $j\ge 0$ the following condition holds: 
$$
\mathcal B(i,j,n):\qquad\sigma_{i, n}\sigma_{j,n-j}=\alpha_{i,j}\sigma_{i+j,n-j}\,.
$$

\item  For all $n$ the following condition holds: 
\begin{align*}
\mathcal C(n): \quad&\text{The map}\quad 
\Phi_n\colon V_0\oplus V_1\oplus \dots \oplus V_{n}\to U_{n+1}\quad\text{given by}\\
&\qquad \Phi_n(x_0,x_1, \dots, x_{n})=\sum_{i=1}^{n+1} \sigma_{i,n+1-i}(x_{n+1-i})\\
&\text{is a split injection.}
\end{align*}

\item For all $n$ the following holds: 
$$\mathcal D(n):\quad U_n=\sum_{k=0}^n\sigma_{k,n-k}(U_{n-k})=\bigoplus_{k=0}^n\sigma_{k,n-k}(V_{n-k})\,,
$$
where the direct sum indicates an  {\it internal} direct sum, that is, every element of $U_n$ can be written uniquely as a sum of elements in $\sigma_{k,n-k}(V_{n-k})$. 
\end{enumerate}

Once these conditions are established,  we define 
$$y_iu=\sigma_{i,n}(u)\quad\text{for all $n$, $i\ge 0$ and $u\in U_n$}$$  
and then extend linearly to a multiplication $A\times U\to U$. Conditions (1)-(3)  imply that this rule defines a dg $A$-module structure on $U$.  Item (5) shows that $U$ is a semi-free dg $A$-module. Indeed, condition $\mathcal D(n)$ can be re-written as 
$U_n=\bigoplus_{k=0}^ny_kV_{n-k}$, implying that 
a basis of $U^\#$ over $A^\#$ is $\cup_{i\ge 0}B_i$, where $B_i$ denotes a $Q$-basis of $V_i$. 

Before proceeding with the bulk of the proof, we show: 

\begin{Claim}
Let $n$ be an integer. Assume $\mathcal C(m)$, $\mathcal D(m)$ and $\mathcal B(i,j,m)$ hold for all $m$ with $0\le m\le n-1$ and all $i,j\ge 0$. Then 
\begin{equation}
\label{claim}
\sum_{k=j}^n\sigma_{k,n-k}(U_{n-k})=\bigoplus_{k=j}^n \,\sigma_{k,n-k}(V_{n-k})\subseteq U_n\quad\text{for all $j\ge 1$}\,.
\end{equation}
\end{Claim}

\noindent{\it Proof of Claim.}
Let $j\ge 1$. We have: 
\begin{align*}
\sum_{k=j}^n\sigma_{k,n-k}(U_{n-k})&=\sum_{k=j}^n \sum_{k'=0}^{n-k} \sigma_{k,n-k}\sigma_{k',n-k-k'}(V_{n-k-k'})\\
&\subseteq \sum_{k=j}^n \sum_{k'=0}^{n-k}\sigma_{k+k', n-(k+k')}(V_{n-(k+k')})\\&
\subseteq \sum_{l=j}^n \sigma_{l,n-l}(V_{n-l})\\
&\subseteq \sum_{k=j}^n\sigma_{k,n-k}(U_{n-k})\,.
\end{align*}
We used $\mathcal D(n-k)$ in the first line, $\mathcal B(k,k', n-k)$ in the second, and $V_i\subseteq U_i$ in the fourth. (Note that $k\ge j\ge 1$ and thus $n-k\le n-1$.) It follows that all inclusions above are in fact equalities. Further, the sum $\sum_{k=j}^n \sigma_{k,n-k}(V_{n-k})$ is an internal direct sum, as can be seen from $\mathcal C(n-1)$. This finishes the proof of the claim. 
\medskip

We now proceed to construct the maps $\sigma_{i,n}$ and the modules $V_n$.
We start by setting $\sigma_{*,n}=0$ and $V_n=0$ whenever $n<0$. With this definition, note  that (1)-(5) hold when $n<0$. We proceed by induction.
 
 Consider the following statement, depending on an integer $k$. 

\begin{ind1} The maps $\sigma_{i,k}\colon U_k\to U_{i+k}$ are defined for all $i\ge 0$ and  the free $Q$-module $V_k$ with $V_k\subseteq U_k$ are defined, and satisfy the properties: 
\begin{enumerate}[\qquad $\bullet$]
\item $\sigma_{0,k}=\text{id}_{U_k}$
\item $\mathcal A(i,k)$ holds for all $i\ge 1$
\item  $\mathcal B(i,j,k)$ holds for all $i\ge 0$, $j\ge 0$
\item $\mathcal C(k)$ and $\mathcal D(k)$  hold. 
\end{enumerate}
\end{ind1}

As noted above, this statement holds when $k<0$. Let $n\ge 0$. Assume that Induction statement I holds for all $k\le n-1$.  
To complete the induction, we define next $\sigma_{*,n}$ and  $V_n$, and we show that all four items in Induction statement I hold when $k=n$. 
\smallskip

We start by defining $\sigma_{0,n}$ to be the identity map on $U_n$. Then,  we define the free module $V_n$ as follows: Condition $\mathcal C(n-1)$ gives that $\text{Im}\Phi_{n-1}$ is a direct summand of $U_n$. We define $V_n$ to be the complementary direct summand, so that 
$$
U_n=\text{Im}\Phi_{n-1}\oplus V_n=\text{Im}\Phi_{n-1}\oplus \sigma_{0,n}(V_n)\,.
$$
Observe that $\mathcal D(n)$ must then hold, as it is a direct consequence of this definition and of $\mathcal C(n-1)$. We need to define now $\sigma_{i,n}$ for $i\ge 1$. We proceed by induction. 

With $n$ fixed as above, consider the following statement, depending on an integer $l\ge 0$:

\begin{ind2}   The map $\sigma_{l,n}\colon U_n\to U_{l+n}$ is defined such that 
\begin{enumerate}[\qquad $\bullet$]
\item $\mathcal A(l,n)$ holds
\item  $\mathcal B(l,j,n)$ holds for all $j\ge 0$.
  \end{enumerate}
  \end{ind2}
  
 Note that these two conditions hold trivially when $l=0$.  Let $i\ge 1$ and assume that Induction statement II holds for all $l$ with $l\le i-1$. We now define $\sigma_{i,n}$ and we show that the two  items in Induction Statement II also hold with $l=i$. 
 
  In view of the  direct sum decomposition of $U_n$ given by condition $\mathcal D(n)$ (which we know holds, see above), in order to define $\sigma_{i,n}$  it suffices to define the restriction functions $\restr{\sigma_{i,n}}{\sigma_{k,n-k}(V_{n-k})}$ for all $k$ with $0\le k\le n$. 

Assume first $k>0$. Note that $\mathcal C(n-k)$ implies that the restricted function 
$$\restr{\sigma_{k,n-k}}{V_{n-k}}\colon V_{n-k}\to \sigma_{k,n-k}(V_{n-k})$$
 is bijective. We define then
 $$
 \restr{\sigma_{i,n}}{\sigma_{k,n-k}(V_{n-k})}:=\alpha_{i,k}\restr{\sigma_{i+k,n-k}}{V_{n-k}}(\restr{\sigma_{k,n-k}}{V_{n-k}})^{-1}\,.
 $$
 In other words, this definition ensures that 
 \begin{equation}
 \label{e:restricted}
\restr{\sigma_{i,n}\sigma_{k,n-k}}{V_{n-k}}=\alpha_{i,k}\restr{\sigma_{i+k,n-k}}{V_{n-k}}\quad \text{when $k>0$}\,.
 \end{equation}
We need to define now $\restr{\sigma_{i,n}}{V_n}$. To do so, we first claim that 
\begin{equation}
\label{need}
\left(f_i\sigma_{i-1,n}-(-1)^{i+1}\sigma_{i,n-1}\partial_n\right)(U_n)\subseteq \partial_{i+n}(U_{i+n})\,.
\end{equation}
Assuming \eqref{need} holds and recalling that $V_n$ is a free $Q$-module,  we can define $\restr{\sigma_{i,n}}{V_n}\colon V_n\to U_{i+n}$ so that 
$$
\partial_{i+n}\restr{\sigma_{i,n}}{V_n}=\restr{f_i\sigma_{i-1,n}-(-1)^{i+1}\sigma_{i,n-1}\partial_n}{V_n}\,.
$$
Indeed, fix a basis of $V_n$ and define $\restr{\sigma_{i,n}}{V_n}(e)$ for a basis element $e$ to be the preimage of  $\left(f_i\sigma_{i-1,n}-(-1)^{i+1}\sigma_{i,n-1}\partial_n\right)(e)$ under $\partial_{i+n}$, and then extend by linearity. Note that this  definition depends on the choice of preimage, and thus is not unique. 

To prove \eqref{need}, we compute: 
\begin{align*}
\partial_{i+n-1}(f_i\sigma_{i-1,n}&-(-1)^{i+1}\sigma_{i,n-1}\partial_n)\\
&=f_i\partial_{i+n-1}\sigma_{i-1,n}+(-1)^i\partial_{i+n-1} \sigma_{i,n-1}\partial_n\\
&=f_i\left(f_{i-1}\sigma_{i-2,n}-(-1)^i \sigma_{i-1,n-1}\partial_n\right)+\\
&\qquad\qquad +(-1)^i\left(f_i\sigma_{i-1,n-1}\partial_n-(-1)^{i+1}\sigma_{i-1,n-2}\partial_{n-1}\partial_n\right)\\
&=0\,,
\end{align*}
where for the second equality we used the induction hypothesis that $\mathcal A(i-1,n)$ and $\mathcal A(i, n-1)$ hold, and for the third equality we used $f_if_{i-1}=fg=0$ and $\partial^2=0$.  The computation above shows
$$
\left(f_i\sigma_{i-1,n}-(-1)^{i+1}\sigma_{i,n-1}\partial_n\right)(U_n)\subseteq\Ker(\partial_{i+n-1})\,.
$$
Since $i\ge 1$ and $n\ge 0$, we have $i+n-1\ge 0$. When $i+n-1>0$, we use the fact that $U$ is acyclic, and hence $\text{Ker}(\partial_{i+n-1})=\text{Im}(\partial_{i+n})$ and thus \eqref{need} holds. 
When $i=1$ and $n=0$ we have 
$$
f_1\sigma_{0,0}-(-1)^{2}\sigma_{1,-1}\partial_0=f_1\text{id}_{U_0}\,.
$$
Since $U$ is a minimal free resolution of $M$, we have $M=U_0/\partial_1(U_1)$. Since $fM=0$, we have  $f_1U_0\subseteq \partial_1(U_1)$. This finishes the proof of \eqref{need} and the definition $\sigma_{i,n}$. 

Let $j\ge 0$. We want to prove that $\mathcal B(i,j,n)$ holds. Since $\sigma_{0,n}$ is the identity map, $\mathcal B(i,0,n)$ holds. 

 Assume now $j>0$. In view of the direct sum decomposition provided by $\mathcal D(n-j)$,  in order to prove $\mathcal B(i,j,n)$, it suffices to show
$$\restr{\sigma_{i,n}\sigma_{j,n-j}}{\sigma_{k,n-j-k}(V_{n-j-k})}=\alpha_{i,j}\restr{\sigma_{i+j,n-j}}{\sigma_{k,n-j-k}(V_{n-j-k})}
$$
for all $k$ with $0\le k\le n-j$. We need to check that
$$\restr{\sigma_{i,n}\sigma_{j,n-j}\sigma_{k,n-j-k}}{V_{n-j-k}}=\alpha_{i,j}\restr{\sigma_{i+j,n-j}\sigma_{k,n-j-k}}{V_{n-j-k}}\,.
$$
Indeed, we have: 
\begin{align*}
\restr{\sigma_{i,n}\sigma_{j,n-j}\sigma_{k,n-j-k}}{V_{n-j-k}}
&=\alpha_{j,k}\restr{\sigma_{i,n}\sigma_{j+k, n-(j+k)}}{V_{n-(j+k)}}\\
&=\alpha_{j,k}\alpha_{i,j+k}\restr{\sigma_{i+j+k, n-(j+k)}}{V_{n-(j+k)}}\\
&=\alpha_{i,j}\alpha_{i+j,k}\restr{\sigma_{i+j+k, n-(j+k)}}{V_{n-(j+k)}}\\
&=\alpha_{i,j}\restr{\sigma_{i+j,n-j}\sigma_{k,n-j-k}}{V_{n-j-k}}\,.
\end{align*}
Here, we used $\mathcal B(j,k,n-j)$ in the first equality, \eqref{e:restricted} in the second, \eqref{2} in the third,  and $\mathcal B(i+j,k,n-j)$ 
 in the last. This finishes the proof of $\mathcal B(i,j,n)$.

Next,  we want to prove that $\mathcal A(i,n)$ holds. Given the definition of $\restr{\sigma_{i,n}}{V_n}$, we know that the relation holds when we restrict the functions to $V_n$. We need to also check this relation when the functions are restricted to $\sigma_{k,n-k}(V_{n-k})$ with $k>0$. To this extent, it suffices to show 
$$
(f_i\sigma_{i-1,n}-(-1)^{i+1}\sigma_{i,n-1}\partial_n)\sigma_{k,n-k}=\partial_{i+n}\sigma_{i,n}\sigma_{k,n-k}\quad\text{for all $k>0$}\,.
$$
Let $k>0$. The computation below achieves the desired conclusion: 
\begin{align*}
(f_i&\sigma_{i-1,n}-(-1)^{i+1}\sigma_{i,n-1}\partial_n)\sigma_{k,n-k} \\
&=f_i\sigma_{i-1,n}\sigma_{k,n-k}-(-1)^{i+1}\sigma_{i,n-1}\partial_n\sigma_{k,n-k}\\
&=\alpha_{i-1,k}f_i\sigma_{i-1+k,n-k}-(-1)^{i+1}\sigma_{i,n-1}(f_k\sigma_{k-1, n-k}-(-1)^{k+1}\sigma_{k,n-k-1}\partial_{n-k})\\
&=\alpha_{i-1,k}f_i\sigma_{i-1+k,n-k}+(-1)^{i}\alpha_{i,k-1}f_k\sigma_{i+k-1,n-k}+(-1)^{i+k}\alpha_{i,k}\sigma_{i+k,n-k-1}\partial_{n-k}\\
&=\alpha_{i,k}f_{i+k}\sigma_{i+k-1,n-k}-\alpha_{i,k}(-1)^{i+k+1}\sigma_{i+k,n-k-1}\partial_{n-k}\\
&=\alpha_{i,k}\partial_{i+n}\sigma_{i+k,n-k}\\
&=\partial_{i+n}\sigma_{i,n}\sigma_{k,n-k}\,.
\end{align*}
We used $\mathcal B(i-1,k,n)$ and $\mathcal A(k,n-k)$ for the second equality, $\mathcal B(i,k-1,n-1)$ and $\mathcal B(i,k,n-1)$ for the third, \eqref{1} for the fourth, $\mathcal A(i+k, n-k)$ for the fifth and $\mathcal B(i,k,n)$ (which was proved earlier) for the last equality.

At this point, we verified Induction statement II for $l=i$, hence we have a definition of $\sigma_{i,n}$ for all $i$, such that $\mathcal A(i,n)$ holds and $\mathcal B(i,j,n)$ holds for all $i,j\ge 0$. We need to finalize the proof of the Induction statement I for $k=n$. The only remaining item is to show $\mathcal C(n)$, namely that the map $\Phi_n$ is a split injective. We will show that if $\Phi_n(x)\in \fn U_{n+1}$ for 
 $$x=(x_0, x_1, \dots, x_{n})\in V_0\oplus V_1\oplus \dots \oplus V_n\,,$$
 then $x_i\in \fn V_i$ for all $i$. 

Assume $\Phi_n(x)\in \fn U_{n+1}$. 
Then $\partial_{n+1}\Phi_n(x)\in \fn^2U_{n}$. For the purposes of the next computation, we set $x_{n+1}=0$. 
We have then:
\begin{align}
\begin{split}
\label{e:compute}
\partial&_{n+1}\Phi_n(x)=\sum_{i=1}^{n+1}\partial_{n+1}\sigma_{i,n+1-i}(x_{n+1-i})\\
&=\sum_{i=1}^{n+1}f_i\sigma_{i-1,n+1-i}(x_{n+1-i})-(-1)^{i+1}\sigma_{i,n-i}\partial_{n+1-i}(x_{n+1-i})\\
&=\sum_{j=0}^{n}f_{j+1}\sigma_{j,n-j}(x_{n-j})+\sum_{j=0}^{n}(-1)^{j}\sigma_{j,n-j}\partial_{n+1-j}(x_{n+1-j})\\
&=\sum_{j=0}^n\sigma_{j,n-j}\left(f_{j+1}x_{n-j}-\partial_{n-j+1}(x_{n-j+1})\right)\,.
\end{split}
\end{align}
We used $\mathcal A(i,n+1-i)$ for the second equality. Next, we set
$$
w_j=\sigma_{j,n-j}\left(f_{j+1}x_{n-j}-\partial_{n-j+1}(x_{n-j+1})\right)\,,
$$
so that \eqref{e:compute} becomes
$$
\partial_{n+1}\Phi_n(x)=\sum_{j=0}^n w_j\,.
$$
We prove by induction on $i$ with $0\le i\le n+1$  that $x_{n+1-i}\in \fn V_{n+1-i}$. This is true when $i=0$, as we defined $x_{n+1}=0$.  Let $k$ be so that $0 \leq k \leq n$ and assume that $x_{n+1-i}\in \fn V_{n+1-i}$ for all $i$ with $0\le i\le k$.  The fact that that $f_i\in \fn$ for all $i$ and the minimality of $U$ imply
$$w_j\in \fn^2V_{n-j}\quad\text{for all}\quad j\le k-1\,.$$
To complete the induction argument, we show that $x_{n-k}\in \fn V_{n-k}$. 
We have
\begin{align*}
f_{k+1}\sigma_{k,n-k}(x_{n-k})+\sum_{j=k+1}^n w_j&=w_k+\sigma_{k,n-k}\partial_{n-k+1}(x_{n-k+1})+\sum_{j=k+1}^n w_j\\
&=\sigma_{k,n-k}\partial_{n-k+1}(x_{n-k+1})+\partial_{n+1}\Phi_n(x)-\sum_{j=0}^{k-1} w_j\\
&\in \fn^2U_{n}\,.
\end{align*}
Recall the direct sum decomposition of $U_n$  given by property $\mathcal D(n)$, and notice that $f_{k+1}\sigma_{k,n-k}(x_{n-k})$ and $\sum_{j=k+1}^n w_j$ belong to distinct summands in this decomposition. Indeed, we have $f_{k+1}\sigma_{k,n-k}(x_{n-k})\in \sigma_{k,n-k}(V_{n-k})$, while  \eqref{claim} implies
$$
\sum_{j=k+1}^nw_j\in \sum_{j=k+1}^{n}\sigma_{j,n-j}(U_{n-j})=\bigoplus_{j=k+1}^{n}\sigma_{j,n-j}(V_{n-j})\,.
$$
We conclude $f_{k+1}\sigma_{k,n-k}(x_{n-k})\in \fn^2 U_n$, and hence $\sigma_{k,n-k}(x_{n-k})\in \fn U_n$, since $f_{k+1}\notin \fn^2$. Since $\Phi_{n-1}$ is a split injection by $\mathcal C(n-1)$,  it follows $x_{n-k}\in \fn V_{n-k}$.

We showed thus that $\mathcal C(n)$ holds.  This finishes the proof of Induction Statement I and concludes the proof of (a). 

(b) The fact that $(f,g)$ is a pair of exact zero divisors implies that $A$ is a minimal free resolution of $R$ over $Q$, as noted in \cref{ezd}.  The conclusion follows then from \cref{l:standard}, making use of (a). 
\end{proof}

\begin{remark}
The proof of the Main Theorem is inspired by the construction of Shamash described in \cite[Proposition 2.2]{avramov}. It is proved there that if $f\in \fn\smallsetminus \fn^2$, then, with notation as in our theorem, $U$ has a structure of semi-free dg module over the Koszul complex $K=Q\langle y\mid \partial(y)=f\rangle$. As noted in \cite[Remark 2.2.1]{avramov}, since $f\text{id}^U$ and $0^U$ both induce the zero map on $M$, they are homotopic, say $f\text{id}^U=\partial\sigma+\sigma\partial$, and the existence of the desired dg module structure is equivalent with the existence of a homotopy $\sigma$ with $\sigma^2=0$. The map $\sigma_{1,*}$ constructed in our proof above is exactly the homotopy $\sigma$ constructed in the proof of \cite[Proposition 2.2]{avramov}. In fact, all maps $\sigma_{i,*}$ with $i\ge 1$ are homotopies. Indeed, 
since $f_{i-1}f_i=0$, we see from condition $\mathcal A(i,n)$ in \eqref{A} that  $f_{i-1}\sigma_{i, *}$ is a chain map of degree $i$ on $U$, for all $i\ge 1$. Consequently, $\mathcal A(i,n)$ shows that $\sigma_{i,*}$ is a homotopy between 
$f_i\sigma_{i-1, *}$ and the zero map on $U$.  
\end{remark}
\begin{remark}
\label{r:betti}
Assume that the hypothesis of the Main Theorem holds. In the proof of the theorem, we saw that a basis for $U^\#$ over $A^\#$ can be taken to consist of the union of the bases  of the free $Q$-modules $V_0, V_1, V_2, \dots $. With this choice of basis, the modules $V_n$ defined in the proof of the theorem coincide with the modules $V_n$ introduced  in Construction \ref{constr}, and thus the differential of the complex $U\otimes_AR$ can be understood as described there.  Further, Construction  \ref{constr} and \eqref{sum} give
\begin{equation}\label{UV}
    U_n \cong \bigoplus_{i=0}^n V_i\qquad\text{and}\qquad (U\otimes_AR)_n\cong V_n\otimes_QR\,.
\end{equation}
\end{remark}

When $R$ is a local ring and $M$ is a finitely generated $R$-module, we let $\beta_n^R(M)$ denote the $n$th {\it Betti number} of $M$ over $R$, that is, the rank of the $n$th free $R$-module in a minimal free resolution of $M$, and we let 
$$P_M^R(t)=\sum_{i=0}^\infty \beta_i^R(M)t^i$$
denote the {\it Poincar\'e series} of $M$ over $R$.

 Corollary \ref{last} below recovers the Poincar\'e series formula in \cite[Theorem 1.7]{zerodiv}, and shows that the hypothesis on $f$, $g$ in part (b) of \cref{t:main} is necessary. 

\begin{corollary}
\label{last}
Let $(Q,\fn)$ be a local ring and $f,g\in \fn\smallsetminus \fn^2$ with $fg=0$. Set $R=Q/(f)$ and $A=Q\langle y,t\mid \partial(y)=f, \partial(t)=gy\rangle$. The following are equivalent: 
\begin{enumerate}[\quad\rm(1)]
\item $(f, g)$ is a pair of exact zero divisors;
\item For all finitely generated $R$-modules $M$, if $U$ is a minimal free resolution of $M$ over $Q$, then  $U\otimes_AR$ is a minimal free resolution of $M$ over $R$;
\item For all finitely generated $R$-modules $M$,  
$P_M^R(t)=(1-t)P_M^Q(t)$. 
\end{enumerate}
\end{corollary}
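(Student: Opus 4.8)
The plan is to prove the cycle of implications $(1)\Rightarrow(2)\Rightarrow(3)\Rightarrow(1)$. The implication $(1)\Rightarrow(2)$ is immediate: it is precisely part (b) of \cref{t:main}, the required semi-free dg $A$-module structure on $U$ being supplied by part (a). For $(2)\Rightarrow(3)$, fix a finitely generated $R$-module $M$, a minimal free resolution $U$ of $M$ over $Q$ equipped with a semi-free dg $A$-module structure as in \cref{t:main}(a), and let $V_0,V_1,\dots$ be the associated free $Q$-modules. By \eqref{UV} we have $U_n\cong\bigoplus_{i=0}^nV_i$ and $(U\otimes_AR)_n\cong V_n\otimes_QR$. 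Since $U$ is minimal over $Q$ this gives $\beta_n^Q(M)=\rank_QU_n=\sum_{i=0}^n\rank_QV_i$, and since hypothesis (2) makes $U\otimes_AR$ a minimal free resolution over $R$ it gives $\beta_n^R(M)=\rank_QV_n$. Hence $\beta_n^Q(M)=\sum_{i=0}^n\beta_i^R(M)$ for all $n$, which is exactly the Poincar\'e series identity $P_M^R(t)=(1-t)P_M^Q(t)$.

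The heart of the matter is $(3)\Rightarrow(1)$, and the decisive step is to apply (3) to the single module $M=R$. Since $R$ is free over itself, $P_R^R(t)=1$, so (3) forces $P_R^Q(t)=(1-t)^{-1}$; that is, $\beta_n^Q(R)=1$ for every $n\ge0$. Consequently the minimal free resolution of $R=Q/(f)$ over $Q$ has a rank-one free module in each homological degree, say
$$
\cdots\lra Q\xra{d_3}Q\xra{d_2}Q\xra{d_1}Q\lra R\lra0\,.
$$
Here $\Ima(d_1)=(f)$, so $\Ima(d_2)=\Ker(d_1)=\ann(f)$, and $\beta_2^Q(R)=1$ forces $\ann(f)$ to be principal, say $\ann(f)=(h)$; note $h\in\fn$, since otherwise $\ann(f)=Q$ and $f=0$, contradicting $f\in\fn\smallsetminus\fn^2$. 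From $fg=0$ we have $g\in\ann(f)=(h)$, say $g=ah$; since $g\notin\fn^2$ while $h\in\fn$, the element $a$ must be a unit (if $a\in\fn$ then $g\in\fn^2$), so $\ann(f)=(h)=(g)$. One step further, $\Ima(d_3)=\Ker(d_2)=\ann(g)$ is principal because $\beta_3^Q(R)=1$, and the same reasoning applied to $fg=0$ with $f\in\fn\smallsetminus\fn^2$ yields $\ann(g)=(f)$. Thus $(f,g)$ is a pair of exact zero divisors, which is (1).

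The main obstacle is the implication $(3)\Rightarrow(1)$: one must recognize that the apparently weak numerical hypothesis, tested on the single module $R$ itself, already rigidifies the entire minimal $Q$-free resolution of $R$, and then run the short local-ring computation that promotes ``$\ann(f)$ is principal and contains $g\notin\fn^2$'' to the exact equality $\ann(f)=(g)$, and similarly $\ann(g)=(f)$. The other two implications are bookkeeping on top of \cref{t:main} and the description of $U\otimes_AR$ in \cref{constr}.
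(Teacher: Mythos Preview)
Your proof is correct and follows essentially the same approach as the paper's own argument: the same cycle $(1)\Rightarrow(2)\Rightarrow(3)\Rightarrow(1)$, the same use of \cref{t:main}(b) and the rank identities from \eqref{UV}, and the same specialization to $M=R$ in $(3)\Rightarrow(1)$ to force $\beta_n^Q(R)=1$, followed by the identical local argument that a principal annihilator containing an element outside $\fn^2$ must be generated by that element. Your write-up simply unpacks a few steps (e.g.\ why $h\in\fn$, and the explicit form of the resolution) that the paper leaves implicit.
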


\begin{proof}
The implication (1)$\implies$(2) is established in \cref{t:main}(b). 

(2)$\implies$(3):  Let $n\ge 0$. Under the hypothesis of (2), we have $\beta_n^R(M)=\rank_R(U\otimes_AR)_n$ and $\beta_n^Q(M)=\rank_Q(U_n)$. Then \eqref{UV} gives 
$$
\beta^Q_n(M)=\sum_{i=0}^n \beta_i^R(M)
$$
and the Poincar\'e series formula follows from here. 

(3)$\implies$(1): If (3) holds, take $M=R$ in the Poincar\'e series formula to conclude $P^Q_R(t)=(1-t)^{-1}$, and hence $\beta_n^Q(R)=1$ for all $n\ge 0$. Since $\ann(f)$ is a second syzygy in a minimal free resolution of $R=Q/(f)$ over $Q$, we conclude that $\ann(f)$ is principal. If $\ann(f)=(h)$ for some $h\in \fn$, then, since $g\in (h)$ and $g\notin \fn^2$, we must have $g=uh$ for some unit $u$, and hence $\ann(f)=(h)=(g)$. Then $\ann(g)$ is a third syzygy in a minimal free resolution of $R$ over $Q$, and we conclude it must be principal as well. Similarly, we obtain $\ann(g)=(f)$.
\end{proof}

\subsubsection*{Acknowledgments} 
We are thankful to the referee for a careful reading, and useful suggestions.

\end{document}